\newcommand*{\rom}[1]{\expandafter\@slowromancap\romannumeral #1@}
\newcommand{\R}{\mathbb{R}}
\newtheorem{theorem}{Theorem}[section]
\newtheorem{lemma}[theorem]{Lemma}
\newtheorem{proposition}[theorem]{Proposition}
\theoremstyle{definition}
\theoremstyle{remark}
\newtheorem{remark}[theorem]{Remark}
\numberwithin{equation}{section}
\newcommand{\T} {\mathbb T}
\newcommand{\pa}{\partial}
\newcommand{\lt}{\left}
\newcommand{\rt}{\right}
\newcommand{\bq}{\begin{equation}}
\newcommand{\eq}{\end{equation}}
\newcommand{\calC}{\mathcal C}
\newcommand{\calE}{\mathcal E}
\newcommand{\calL}{\mathcal L}
\newcommand{\calR}{\mathcal R}
\newcommand{\calX}{\mathcal X}
\newcommand{\intt}{\int_\T}
\renewcommand{\geq}{\geqslant}
\renewcommand{\ge}{\geqslant}
\renewcommand{\leq}{\leqslant}
\renewcommand{\le}{\leqslant}
\newcommand{\rd}{\textnormal{d}}
\newcommand{\dx}{\textnormal{d}x}
\newcommand{\dy}{\textnormal{d}y}
\newcommand{\dz}{\textnormal{d}z}
\newcommand{\dt}{\textnormal{d}t}
\renewcommand{\sout}[1]{}
\renewcommand{\cancel}[1]{}
\begin{document}

\title[Large-time behavior in pressureless Euler--Poisson equations]{Large-time behavior of  pressureless\\ Euler--Poisson equations with background states}

\author{Young-Pil Choi}
\address{Department of Mathematics, Yonsei University, Seoul 03722, Republic of Korea}
\email{ypchoi@yonsei.ac.kr}

\author{Dong-ha Kim}
\address{CAU Nonlinear PDE Center, Chung-ang University, Seoul 06974, Republic of Korea}
\email{kimdongha91@cau.ac.kr}

\author{Dowan Koo}
\address{Department of Mathematics, Yonsei University, Seoul 03722, Republic of Korea}
\email{dowan.koo@yonsei.ac.kr}

\author{Eitan Tadmor}
\address{Department of Mathematics and IPST, University of Maryland, College Park, MD 20742, USA}
\email{tadmor@umd.edu}

\date{\today}
\subjclass[2020]{Primary, 35Q35; Secondary, 35B40}

\keywords{}

\thanks{\textbf{Acknowledgment.} 
The research of YPC and DK were supported by  NRF grant 2022R1A2C1002820 and RS-2024-00406821. The research of ET was supported by ONR grant N00014-2412659, NSF grant DMS-2508407 and  by the Fondation Sciences Math\'ematiques de Paris (FSMP) while being hosted by
LJLL at Sorbonne University.}

\begin{abstract}
We study the large-time asymptotic behavior of solutions to the one-dimensional damped pressureless Euler--Poisson system with variable background states, subject to a neutrality condition. In the case where the background density converges asymptotically to a positive constant, we establish the convergence of global classical solutions toward the corresponding equilibrium state. The proof combines phase plane analysis with hypocoercivity-type estimates. As an application, we analyze the damped pressureless Euler--Poisson system arising in cold plasma ion dynamics, where the electron density is modeled by a Maxwell--Boltzmann relation. We show that solutions converge exponentially to the steady state under suitable a priori bounds on the density and velocity fields. Our results provide a rigorous characterization of asymptotic stability for damped Euler--Poisson systems with nontrivial background structures.
\end{abstract}


\maketitle

\tableofcontents
%
%
%
%
%

\section{Introduction}
In this paper, we consider the damped pressureless Euler--Poisson (EP) system with background states in one spatial dimension and analyze the large-time behavior of its classical solutions. Specifically, we study the global-in-time dynamics of solutions to the following system posed on the one-dimensional periodic domain $\T= [-\frac{1}{2},\frac{1}{2})$:
\begin{equation}\label{main_sys}
\begin{cases}
\partial_t \rho + \partial_x(\rho u) = 0, \\
\partial_t u + u \partial_x u = -\nu u  -k \pa_x\phi,\\
-\pa^2_{x} \phi = \rho - c,
\end{cases} 
\end{equation}
subject to initial data
\begin{equation}\label{ini}
(\rho, u)(0,x)=(\rho_0, u_0)(x), \quad x \in \T.
\end{equation}
Here, $\rho=\rho(t,x)$ and $u=u(t,x)$ denote the density and velocity fields, respectively, and $\pa_x\phi = \pa_x\phi(t,x)$ represents the induced force. The parameter $\nu > 0$ is the strength of damping, and the parameter $k$ on the right of  \eqref{main_sys}${}_2$ is a physical constant which characterizes the forcing of the system ---  either an \emph{attractive} or \emph{repulsive} forcing depending on whether  $k <0$ or, respectively, $k>0$.

The Poisson equation \eqref{main_sys}${}_3$ involves a prescribed positive background state, $c = c(t,x)$, which is assumed to be uniformly bounded away from vacuum:
\bq \label{eq:cbd}
 0 <  c_{-} \leq c(t,x) \leq c_{+}, \quad \forall \, (t,x) \in \mathbb{R}_+ \times \T,
\eq
where $c_-$ and $c_+$ are positive constants. We further impose the \emph{neutrality condition}:
\bq\label{def_neu} 
\int_{\T} \rho(t,x) - c(t,x) \,\dx = 0 , \quad \forall \, t \ge0,
\eq
which ensures that the total deviation of the density from the background is zero at each time.

Throughout the paper, a triple $(\rho, u, \phi)$ is called a {\it global classical solution} to the system  \eqref{main_sys}--\eqref{def_neu} if it satisfies the system pointwise for all $t>0$, subject to the initial condition \eqref{ini} and the neutrality condition \eqref{def_neu}.

 The EP system serves as a fundamental model in various physical contexts, including plasma physics, semiconductor theory, and self-gravitating fluids. In particular, the damped EP system with variable background states arises naturally in the modeling of carrier transport in semiconductors  \cite{J01, MRS90}. In such contexts, the background profile $c(t,x)$ is referred to as the \emph{doping profile}, representing the distribution of fixed charges inside the material. The damping term models relaxation effects such as scattering with the lattice. The interplay between the carrier density $\rho$ and the doping profile $c$, mediated through the Poisson equation, governs the electric potential $\phi$ and thereby the carrier dynamics. From both modeling and analytical perspectives, understanding the large-time behavior of solutions to damped EP systems with background states provides valuable insights into equilibration mechanisms and long-time stability.

The mathematical analysis of EP systems has been concerned with several important directions, notably the study of global existence versus finite-time singularity formation and the investigation of the large-time asymptotic behavior of global solutions. In particular, the notion of \emph{critical thresholds} was introduced in \cite{ELT01} for the one-dimensional pressureless EP system, providing sharp conditions for global smooth solutions versus finite-time breakdown.  This approach relied on a detailed analysis of characteristic flows and has been further developed primarily in the one-dimensional setting. Some restricted extensions to multidimensional cases were achieved under symmetry assumptions \cite{LT02,LT03}.

The influence of damping, background profiles, and pressure has also been extensively studied. Convergence to equilibrium under suitable subcritical conditions has been shown in \cite{CCZ16, CCTT16, TT14, TW08}, while sharp critical thresholds in the pressureless case with variable background states have been identified using phase plane analysis \cite{BL20_2, CKKTpre}. The global-in-time regularity of one-dimensional solutions has been established in \cite{GHZ17}. In higher dimensions, global existence and stability results for irrotational flows with constant background were obtained in \cite{GP11, IP13, JLZ14, LW14}. Nonlocal variants such as the damped pressureless Euler--Riesz system were analyzed in \cite{CJ23}, where algebraic and exponential decay rates were obtained using hypocoercivity arguments. Global convergence to equilibrium for ion dynamics with Maxwell--Boltzmann-type electron density was studied in \cite{LP19} through refined energy estimates. We also refer to \cite{BCK24, CKKTpre, L06} for blow-up results in cold plasma ion models, and to \cite{TW08} for the critical thresholds estimates in the pressured case.

In our earlier work \cite{CKKTpre}, motivated by \cite{BL20}, we provided a systematic study of the critical threshold phenomena for pressureless EP systems with variable background states under the neutrality condition \eqref{def_neu}. We established local-in-time well-posedness in function spaces involving negative Sobolev regularity, necessitated by the infinite mass structure induced by neutrality. A key observation was that the presence of damping ($\nu>0$) plays a crucial role in obtaining global regularity for the repulsive case ($k>0$), leading to subcritical conditions guaranteeing global existence.

In contrast, for the attractive case ($k<0$), we showed that the neutrality condition severely restricts the class of admissible backgrounds, effectively reducing to the constant background case in order to maintain global regularity (see \cite[Theorem 1.5]{CKKTpre}). In this reduced setting, global-in-time solutions converge exponentially to equilibrium by direct arguments.
 
Motivated by these observations, the present paper focuses on the repulsive case ($k>0$), and for simplicity, we normalize $k=1$. Our main objective is to characterize the precise large-time asymptotic behavior of global classical solutions under this setting.
%
%
%
%
%
%
%
%
\subsection{Exponential convergence for solutions with asymptotically constant backgrounds}
For the case of constant background $c \equiv \bar{c}$, it is known that an explicit representation formula for solutions to \eqref{main_sys} can be obtained via the method of characteristics (see \cite{ELT01}). This explicit formula provides sharp exponential decay rates of global classical solutions toward equilibrium as $t \to \infty$.

When the background $c = c(t,x)$ varies in time and space, but converges asymptotically to a constant state, it is natural to expect a similar convergence behavior for the solutions. In particular, we assume that
\bq\label{eq:cdcy}
c(t,x) \to \bar c\qquad \text{as} \,\,\, t \to \infty
\eq
for some $\bar c>0$ in a suitable sense, and we investigate the large-time behavior of global classical solutions to \eqref{main_sys}. 

 A key novelty of our result lies in the fact that {\it no smallness condition} on the initial data or on the perturbation from equilibrium is required.   In contrast to earlier works that establish global existence and exponential convergence for small perturbations around constant steady states (e.g., via energy methods or linearization techniques), our analysis handles general (large) global classical solutions satisfying uniform-in-time bounds \eqref{eq:asm}. This robustness is achieved by combining phase plane analysis with hypocoercivity-type estimates, which allow us to control time-dependent perturbations introduced by the varying background state.

We now state our first main theorem:

\begin{theorem}\label{thm:general}
    Let $(\rho, u)$ be a global classical solution of \eqref{main_sys} satisfying
\bq\label{eq:asm}
0 <   {\rho}_- \le \rho(t,x) \le {\rho}_+ < + \infty,\quad \|\pa_x u\|_{L^\infty(\T\times\R_+)} \le M <+\infty
\eq
for some $\rho_-, \rho_+,M >0$.
\begin{enumerate}
    \item If $c(t,x)$ satisfies
    \begin{equation}\label{eq:c1}
         \lim_{t\to\infty}\|c(t,\cdot)-\bar{c}\|_{L^{\infty}(\T)}=0,
    \end{equation}
    then we have 
\begin{equation*}
    \lim_{t\to\infty} \|(\rho-\bar{c}\, ,u,\,\partial_x u, \pa_x \phi, \pa^2_x \phi)\|_{L^{\infty}(\T)} =0.
\end{equation*}

\item If we further assume that
\begin{equation}\label{eq:c2}
\|c(t,\cdot)-\bar{c}\|_{L^{\infty}(\T)}\le C_1e^{-r_1t},
\end{equation}
for some constants $r_1>0$ and $C_1 > 0$, then we have
\begin{equation*}
     \|(\rho-\bar{c}\,,\,u,\,\partial_x u\,,\,\pa_x\phi,\pa^2_x\phi)\|_{L^{\infty}(\T)} \le C_2e^{-r_2t}
\end{equation*}
for some $C_2=C_2(C_1,\rho_\pm,,M,c_\pm, \|u_0\|_{L^1(\T)})>0$ and  $r_2=r_2(r_1, \nu, \bar c)>0$.
\end{enumerate}
\end{theorem}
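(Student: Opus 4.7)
The plan is to transfer the analysis from the PDE to a coupled ODE system along the Lagrangian flow, exploit the dissipative phase portrait of the limiting autonomous $(\rho,d)$-system for part (1), and upgrade convergence to an explicit exponential rate via a hypocoercivity argument for part (2). Let $X(t;x_0)$ be the characteristic defined by $\dot X=u(t,X)$, $X(0;x_0)=x_0$, and along $X$ set $\rho(t), u(t), d(t)=\pa_x u(t,X(t;x_0))$, $E(t)=\pa_x\phi(t,X(t;x_0))$, $\tilde c(t)=c(t,X(t;x_0))$. Differentiating the momentum equation in $x$ and substituting the Poisson equation yields
\begin{equation*}
\dot\rho=-\rho d,\qquad \dot d=-d^2-\nu d+\rho-\tilde c(t),\qquad \dot u=-\nu u-E(t).
\end{equation*}
The a priori bound \eqref{eq:asm} confines $(\rho(t),d(t))$ to the compact rectangle $[\rho_-,\rho_+]\times[-M,M]$ uniformly in $x_0$, while \eqref{eq:c1} gives $\tilde c(t)\to\bar c$ uniformly in $x_0$.

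For part (1), I would analyze the autonomous limit $\tilde c\equiv\bar c$, whose only equilibrium in $\{\rho>0\}$ is $(\bar c,0)$ with stable linearization having eigenvalues of strictly negative real part. The relative-entropy Lyapunov functional $V(\rho,d)=\tfrac12 d^2+\rho-\bar c-\bar c\ln(\rho/\bar c)$ satisfies $\dot V=-d^2(\nu+d)$, which is non-positive on $\{d\ge-\nu\}$; on the compact rectangle the trajectory can reside in the wedge $\{d<-\nu\}$ only for finite total time, for otherwise $V$ would grow unboundedly. LaSalle's invariance principle applied to the residual forward-invariant set then forces the $\omega$-limit set to reduce to $\{(\bar c,0)\}$. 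An asymptotically autonomous argument treating the uniformly vanishing perturbation $\tilde c(t)-\bar c$ promotes this to the full system, yielding $(\rho(t),d(t))\to(\bar c,0)$ uniformly in $x_0$. Integrating $-\pa_x^2\phi=\rho-c$ and using the neutrality condition \eqref{def_neu} to fix the constant, $\|\pa_x\phi(t,\cdot)\|_{L^\infty}\lesssim\|\rho-\bar c\|_{L^\infty}+\|c-\bar c\|_{L^\infty}$, and $\pa_x^2\phi=-(\rho-c)$ is even more direct; both vanish. Finally, Duhamel applied to $\dot u=-\nu u-E(t)$ with $E(t)\to 0$ uniformly yields $u\to 0$ uniformly, completing part (1).

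For the exponential rate in part (2), I would introduce a hypocoercive functional
\begin{equation*}
\mathcal H(\tilde\rho,d)=\tfrac{\alpha}{2}\tilde\rho^2+\beta\tilde\rho d+\tfrac12 d^2,\qquad \tilde\rho=\rho-\bar c,
\end{equation*}
with $\alpha>\beta^2>0$ and $\beta>0$ small; the cross term breaks the degeneracy of the linear dissipation in the $\tilde\rho$-direction. A direct computation on the linearization about $(\bar c,0)$ gives $\dot{\mathcal H}\le-2\lambda\mathcal H+Cf(t)\mathcal H^{1/2}$ for an explicit $\lambda=\lambda(\nu,\bar c)>0$ and $f(t):=|\tilde c(t)-\bar c|\le C_1 e^{-r_1 t}$. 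Using part (1), for $t\ge t_0$ large the cubic remainder $O((|\tilde\rho|+|d|)\mathcal H)$ is absorbed into $-\lambda\mathcal H$, and a Gronwall argument yields $\mathcal H(t)\lesssim e^{-2r_2 t}$ with $r_2=r_2(r_1,\nu,\bar c)>0$. The Poisson equation propagates the rate to $\pa_x\phi$ and $\pa_x^2\phi$, and Duhamel on $\dot u=-\nu u-E$ yields the same rate for $u$; the dependence of $C_2$ on $\|u_0\|_{L^1(\T)}$ enters through the conserved mean $\bar u(t)=e^{-\nu t}\int_\T u_0\,\dx$.

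The main obstacle is the global phase-plane step in part (1): the natural Lyapunov $V$ fails to be monotone in the wedge $\{d<-\nu\}$, so LaSalle cannot be invoked directly, and one must instead bound the residence time in that wedge using \eqref{eq:asm} together with the geometry of the vector field on the nullclines $\{d=0\}$, $\{d=-\nu\}$, $\{\rho=\bar c\}$. A secondary difficulty is the careful calibration of the hypocoercivity coefficients $(\alpha,\beta)$ in part (2) so that the coercive quadratic dissipation dominates both the nonlinear remainder and the non-autonomous forcing, yielding an explicit rate $r_2$ depending on $(r_1,\nu,\bar c)$.
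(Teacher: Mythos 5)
Your reduction to ODEs along characteristics, the hypocoercive cross term, and the downstream treatment of $\pa_x\phi$, $\pa_x^2\phi$ and $u$ (via neutrality, Poincar\'e on $\T$, and Duhamel for $\dot u=-\nu u-E$) all match the spirit of the paper. The essential difference is the choice of phase--plane variables, and it is exactly there that your argument has a genuine gap. You work with $(\rho,d)$, $d=\pa_x u$, which leaves a nonlinear system; your Lyapunov functional $V$ then satisfies $\dot V=-d^2(\nu+d)$ and fails to be monotone on the wedge $\{d<-\nu\}$. Your claim that the trajectory can spend only finite total time there ``for otherwise $V$ would grow unboundedly'' does not follow: since $V$ is not monotone, an infinite accumulated gain of $V$ inside the wedge can be exactly compensated by an infinite accumulated loss outside it, with $V$ remaining bounded throughout. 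So boundedness of $V$ on the compact rectangle gives no control on the residence time, and LaSalle cannot be salvaged this way; you flag this yourself as the ``main obstacle'' but do not resolve it. The gap propagates to part (2), since your absorption of the cubic remainder $O((|\tilde\rho|+|d|)\mathcal H)$ into $-\lambda\mathcal H$ for $t\ge t_0$ presupposes the (unproved) convergence of part (1).

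The paper avoids all of this by the change of variables $s=1/\rho$, $w=\pa_x u/\rho$ from \cite{ELT01}, under which the characteristic system becomes \emph{affine}: $w'=-\nu w+1-cs$, $s'=w$. A single quadratic functional $\calL=\frac{\bar c}{2}(s-\tfrac{1}{\bar c})^2+\tfrac12 w^2$ augmented by the cross term $\lambda(s-\tfrac{1}{\bar c})w$ then yields $(\calL+\lambda\calX)'\le-\lambda\calL+N|c-\bar c|$ globally --- no sign restriction on $d$, no nonlinear remainder, no smallness, and no separate qualitative step: Gr\"onwall gives both the qualitative convergence of part (1) and the exponential rate of part (2) in one stroke, with an explicit $r_2=r_2(r_1,\nu,\bar c)$. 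If you want to repair your argument, the cleanest fix is to adopt this substitution; otherwise you must supply an independent geometric argument bounding the time spent in $\{d<-\nu\}$, which the current proposal does not contain.
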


\begin{remark}
When the spatial domain is the real line $\R$ instead of the periodic torus $\T$, similar decay estimates can be established for the quantities
\[
\rho -\bar c,\, \pa_xu,\, \pa^2_x \phi
\]
by relying on the method of characteristics. The proof in this setting does not rely on the boundedness of the domain. However, the convergence of additional quantities such as $u$ and $\pa_x \phi$ crucially uses the boundedness of the domain and the associated Poincar\'e inequality. In particular, the $\|u_0\|_{L^1(\T)}$ dependence in $C_2$ is only needed for the convergence of zeroth order term $\|u\|_{L^\infty(\T)}$.
\end{remark}

\begin{remark} A related exponential decay result to Theorem \ref{thm:general} was established in \cite{CJ21}, which studied the large-time behavior of solutions to a damped pressureless Euler--Poisson system coupled with an incompressible Navier--Stokes system on the torus $\T^d$ for $d \geq 2$, under the assumption of constant background states. In comparison, our result applies to time-dependent background profiles and requires weaker regularity assumptions on the density: while \cite{CJ21} assumes $\rho \in W^{1,\infty}$, we only require $\rho \in L^\infty$. Moreover, our result provides exponential convergence of all relevant quantities in the $L^\infty$ norm, whereas \cite{CJ21} establishes convergence in the $L^2$ framework.
\end{remark}

%
%
%
%
%
%
%
%
 \subsection{Application to cold plasma ion dynamics}

In many physically relevant settings, Euler--Poisson systems are studied under the assumption that the background density asymptotically approaches a constant state, as formulated in \eqref{eq:cdcy}. Motivated by this, we investigate the large-time behavior of solutions to a specific damped pressureless Euler--Poisson system arising in cold plasma ion dynamics:
\begin{equation}\label{eq:ion}
    \begin{cases}
\rho_t + \pa_x (\rho u) = 0, \\
u_t + u \pa_x u =  -\nu u  -\pa_x\phi, \\
- \pa_{xx}\phi = \rho-e^\phi,
\end{cases} \qquad (t,x)\in \R_+ \times \T,
\end{equation}
where $\rho$ and $u$ represent the density and velocity of ions, respectively.  

This model describes a plasma composed of \emph{massless} electrons and ions with constant temperatures (one for electrons, zero for ions). Here, the electron density is given by the \emph{Maxwell--Boltzmann relation} $e^\phi$, with $\phi = \phi(t,x)$ denoting the electric potential generated by the overall charge distribution (see  \cite{C84}). The equation $\eqref{eq:ion}_3$ is often referred to as the \emph{Poisson--Boltzmann equation}.

It is clear that the Poisson--Boltzmann equation satisfies a neutrality condition:
\[
\intt \rho - e^\phi\,\dx =0.
\]
For simplicity, we assume that the initial ion density $\rho$ has unit mass, so that 
\bq\label{eq:mb:neu}
\intt e^\phi -1\,\dx = 0.
\eq
This model can be interpreted as a special case of the general system \eqref{main_sys} with $(k, c) = (1, e^\phi)$. In order to apply Theorem \ref{thm:general} to this system, it is necessary to show that the background density $c(t,x) = e^{\phi(t,x)}$ converges to a constant state as $t \to \infty$ in a suitable sense. The analysis of this asymptotic behavior forms the core of our second main result. 

Importantly, the following theorem demonstrates that the exponential convergence to equilibrium holds without any smallness assumption on the initial perturbation. This goes beyond the scope of previous results on plasma models, which typically assume that the initial data is close to equilibrium in a strong norm.
\begin{theorem}\label{thm:ion}
Let $(\rho, u)$ be a global classical solution of \eqref{eq:ion} satisfying
\bq\label{eq:asm2}
0 <   \rho_- \le \rho(t,x) \le \rho_+ < + \infty,\quad \|\pa_x u\|_{L^\infty(\T\times\R_+)} \le M<+\infty
\eq
for some $\rho_-, \rho_+, M >0$.
Then, we have 
\[
\|(\rho-1\,,\, u\,,\, \pa_x u\,,\, \pa_x\phi, \pa^2_x\phi, e^\phi -1)\|_{L^\infty(\T)} \leq Ce^{-rt}
\]
for some $C,r>0$ depending only on $\nu, \rho_\pm,M, \calE(0)$. 
\end{theorem}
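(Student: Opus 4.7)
The plan is to reduce Theorem~\ref{thm:ion} to Theorem~\ref{thm:general} by viewing \eqref{eq:ion} as the particular case of \eqref{main_sys} with variable background $c(t,x) = e^{\phi(t,x)}$ and limit state $\bar c = 1$. Two things must be verified: the uniform bound \eqref{eq:cbd} on $c$, and the exponential decay \eqref{eq:c2} of $c$ toward $\bar c$. The uniform bound is immediate from the scalar maximum principle applied to the Poisson--Boltzmann relation $-\pa_x^2\phi + e^\phi = \rho$: at a pointwise maximum of $\phi(t,\cdot)$ one has $-\pa_x^2\phi \ge 0$, so $e^\phi \le \rho \le \rho_+$, hence $\phi \le \log \rho_+$, and analogously $\phi \ge \log \rho_-$ at the minimum. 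Thus $\phi$, and therefore $c = e^\phi$, is uniformly bounded in terms of $\rho_\pm$.

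The core of the argument is to show exponential $L^\infty$ decay of $e^\phi - 1$. To this end I would work with the natural energy
\[
\calE(t) = \int_\T \tfrac{1}{2}\rho u^2 \,\dx + \int_\T \bigl(e^{\phi}(\phi-1)+1\bigr)\,\dx + \tfrac{1}{2}\int_\T |\pa_x\phi|^2\,\dx,
\]
whose three integrands are individually nonnegative, since $g(\phi) := e^\phi(\phi-1)+1$ attains its minimum $0$ at $\phi = 0$. Combining the continuity and momentum equations with the time-differentiated Poisson--Boltzmann equation produces the clean dissipation identity
\[
\frac{d}{dt}\calE(t) = -\nu\int_\T \rho u^2\,\dx.
\]
From the uniform bound $|\phi| \le \log(\rho_+/\rho_-)$ established above, a Taylor expansion gives $g(\phi) \ge c_\star \phi^2$ for some $c_\star > 0$, so $\calE$ controls $\|\sqrt\rho\, u\|_{L^2}^2 + \|\phi\|_{H^1}^2$ from above and below.

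Since this dissipation controls only $u$, I would pass to a hypocoercive modification $\calF(t) := \calE(t) + \varepsilon \mathcal{A}(t)$ with $\varepsilon > 0$ small, for an auxiliary cross term such as $\mathcal{A}(t) = \int_\T u\,\pa_x\phi\,\dx$ or $\int_\T \rho u \phi\,\dx$. Differentiating $\mathcal{A}$ along the flow and substituting the momentum equation yields a coercive $-\|\pa_x\phi\|_{L^2}^2$ contribution from the forcing $-\pa_x\phi$; the remaining cross terms involving $u u_x\phi_x$, the zeroth-order $(e^\phi-1)$ piece, and the nonlocal time derivative $\phi_t = -(-\pa_x^2+e^\phi)^{-1}\pa_x(\rho u)$ can be absorbed by Cauchy--Schwarz using the a priori bounds on $\rho, \phi, \pa_x u$. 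Combined with a Poincar\'e-type estimate accounting for the zero-mean constraint $\int_\T(e^\phi - 1)\,\dx = 0$, this yields $\frac{d}{dt}\calF \le -r\calF$ with $r > 0$ depending only on $\nu, \rho_\pm, M, \calE(0)$. Gr\"onwall then gives $\calE(t) \le Ce^{-rt}$, so $\|\phi\|_{H^1(\T)} \le Ce^{-rt/2}$, and the one-dimensional embedding $H^1(\T) \hookrightarrow L^\infty(\T)$ together with uniform boundedness of $\phi$ provides $\|e^\phi - 1\|_{L^\infty(\T)} \le Ce^{-rt/2}$.

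With these two properties in hand, hypotheses \eqref{eq:cbd} and \eqref{eq:c2} of Theorem~\ref{thm:general}(2) are both satisfied for $c = e^\phi$ and $\bar c = 1$, while \eqref{eq:asm} follows from \eqref{eq:asm2}. Applying Theorem~\ref{thm:general}(2) immediately yields the claimed exponential $L^\infty$ decay of $\rho - 1, u, \pa_x u, \pa_x\phi, \pa_x^2\phi$, and the decay of $e^\phi - 1$ was already established along the way. The main obstacle is the hypocoercivity step: without any smallness assumption on the solution, one must choose $\mathcal{A}$ and $\varepsilon$ so that $\calF$ remains equivalent to $\calE$ and the perturbed dissipation identity closes uniformly in the (possibly large) amplitude of the solution. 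This forces an essential use of the a priori bounds together with careful handling of the nonlocal operator $(-\pa_x^2+e^\phi)^{-1}$ that controls $\phi_t$.
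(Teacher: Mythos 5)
Your proposal follows essentially the same route as the paper: the same free energy $\calE$, the same dissipation identity, the same cross term $\int_\T u\,\pa_x\phi\,\dx$, the same treatment of $\phi_t$ via the operator $-\pa_x^2+e^\phi$, the same use of the neutrality constraint $\int_\T(e^\phi-1)\,\dx=0$ with a Poincar\'e-type bound to absorb the entropy term, and the same final reduction to Theorem~\ref{thm:general}(2). The only cosmetic difference is that you obtain the uniform bound on $\phi$ from the maximum principle ($\log\rho_-\le\phi\le\log\rho_+$), whereas the paper derives it from the energy via the neutrality condition (which forces $\phi$ to vanish somewhere); both are valid.
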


 \begin{remark} Theorem \ref{thm:ion} is stated under an {\it a priori} assumption that a global classical solution satisfying the conditions in \eqref{eq:asm2} exists. However, in \cite{CKKTpre}, the subcritical region of initial data leading to the global existence and uniqueness of classical solutions was analyzed. In particular, initial data satisfying these subcritical conditions guarantee the global-in-time existence of solutions that fulfill the assumptions of Theorem \ref{thm:ion}, thereby validating the large-time behavior stated therein. 
 \end{remark}

 The remainder of the paper is organized as follows. In Section \ref{sec_pphe}, we prove Theorem \ref{thm:general} by combining phase plane analysis with hypocoercivity-type estimates, establishing exponential convergence of solutions under asymptotically constant backgrounds. In Section \ref{sec_cid}, we apply these results to the cold plasma ion dynamics model, and prove Theorem \ref{thm:ion}, demonstrating exponential convergence of solutions to the steady state.

%
%
%
%
%
%
%
%
\section{Phase plane analysis and hypocoercivity estimate}\label{sec_pphe}

In this section, we introduce a phase plane formulation and derive hypocoercivity estimates to study the large-time behavior of solutions. To better capture the nonlinear transport effects and the influence of the background forcing, we introduce rescaled Lagrangian variables that transform the EP system into a more tractable ODE system along characteristics. Following the approach introduced by H. Liu, S. Engelberg and the last author in \cite{ELT01}, we define the Lagrangian variables 
\bq\label{eq:swc}
s:= \frac{1}{\rho}, \quad w:=\frac{\pa_x u}{\rho}
\eq
along the characteristic 
\[
x'(t)=u(t,x(t)). 
\]
For simplicity, we slightly abuse notation and denote $w(t):= w(t,x(t))$ and $s(t):= s(t,x(t))$.

Under these variables, the system \eqref{main_sys} reduces to the following ODE system:
\begin{equation}\label{eq:1sys}
\lt\{\begin{split}
w' &= -\nu w + 1 - c s, \\
s' &= w. 
\end{split}\rt.
\end{equation}
The global-in-time regularity of solutions is guaranteed if $s(t)$ remains positive for all time, thereby preventing finite-time blow-up of $\rho(t,\cdot)$. We recall from \cite[Theorem 1.9]{CKKTpre} that the subcritical region --- i.e.  the set of initial data ensuring global-in-time regularity--- is characterized therein.

To study the large-time behavior of $(w(t),s(t))$, we begin with the following auxiliary lemma.
\begin{lemma}\label{lem:phase}
Let $(w(t),s(t))$ be a global classical solution to \eqref{eq:1sys} satisfying
\bq\label{eq:swB}
|s|,|w|\le B
\eq
for some $B>0$. Assume that the background $c(t)$ satisfies 
\[
0< c_- \le c(t) \le c_+<+\infty
\] 
and 
\bq\label{eq:l:c1}
 |c(t) - \bar c| \le g(t)
\eq
for some nonnegative $g \in C([0,\infty))$ satisfying $g(t) \to 0$ as $t \to \infty$. Then there exist constants $C_0 =C_0(\nu, B, c_\pm)>0$ and $r_0=r_0(\nu,\bar c)>0$ such that 
\bq\label{eq:dcy1}
\lt|s(t) - \frac{1}{\bar c} \rt|^2 + |w(t)|^2 \le C_0 \lt(e^{-r_0t} + \sup_{\tau \in [t/2, t]} g(\tau) \rt) \to 0 \quad\text{as}\quad t \to \infty.
\eq
If we further assume  
\bq\label{eq:l:c2}
|c(t) -\bar c| \le C_1e^{-r_1 t},
\eq
for some $r_1 > 0$, then we have
\bq\label{eq:dcy2}
\lt|s(t)-\frac{1}{\bar{c}}\rt| + |w(t)| \le C_2e^{-r_2t}
\eq
for some $C_2=C_2(  \nu, C_1,B,c_\pm)>0$ and  $r_2=r_2(r_1, \nu, \bar c)>0$. 
\end{lemma}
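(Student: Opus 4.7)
My plan is to convert the ODE system \eqref{eq:1sys} into a perturbed linear damped oscillator around the equilibrium $(1/\bar c, 0)$ and then track its decay through a carefully weighted Lyapunov functional. Setting $\tilde s := s - 1/\bar c$, the system rewrites as $\tilde s' = w$ and
\[
w' = -\nu w - \bar c\, \tilde s - (c(t) - \bar c)\, s,
\]
whose unperturbed $c\equiv \bar c$ dynamics is the linear oscillator $\tilde s'' + \nu \tilde s' + \bar c \tilde s = 0$. The associated matrix has two eigenvalues with strictly negative real part (complex if $\nu^2 < 4\bar c$, real otherwise), so the unperturbed flow relaxes to the origin.

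Since the standard kinetic-plus-potential energy $\tfrac12 w^2 + \tfrac{\bar c}{2}\tilde s^2$ is only conserved by the Hamiltonian part of the flow and does not immediately see the coupling with $\tilde s$, I will introduce a hypocoercivity-type (Villani-style) modified energy
\[
\mathcal{L}(t) := \tfrac{1}{2} w(t)^2 + \tfrac{\bar c}{2}\tilde s(t)^2 + \alpha\, w(t)\tilde s(t),
\]
with $\alpha = \alpha(\nu, \bar c) > 0$ chosen small enough that (i) $\mathcal L$ is equivalent to $w^2 + \tilde s^2$ (which requires $\alpha<\sqrt{\bar c}$), and (ii) the dissipation identity for $\mathcal L$ is coercive. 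Differentiating along \eqref{eq:1sys} produces
\[
\mathcal L'(t) = -(\nu-\alpha) w^2 - \alpha\bar c\, \tilde s^2 - \alpha\nu\, w\tilde s - (c(t)-\bar c)\bigl(sw + \alpha s\tilde s\bigr),
\]
so that, after absorbing $-\alpha\nu w\tilde s$ via Young's inequality and using the uniform bound \eqref{eq:swB} together with \eqref{eq:l:c1}, I obtain
\[
\mathcal L'(t) \leq -\mu_1 \mathcal L(t) + C_* g(t) \sqrt{\mathcal L(t)},
\]
for some $\mu_1, C_* > 0$ depending only on $\nu, \bar c, B$.

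Setting $F := \sqrt{\mathcal L}$ linearises the inequality into $F'(t) \leq -\tfrac{\mu_1}{2} F(t) + \tfrac{C_*}{2} g(t)$, to which Gronwall's inequality applies:
\[
F(t) \leq F(0)\, e^{-\mu_1 t/2} + \tfrac{C_*}{2}\int_0^t e^{-\mu_1(t-\tau)/2}\, g(\tau)\, d\tau.
\]
For part (1), I split the integral at $\tau = t/2$: on $[0, t/2]$ it is bounded by $\|g\|_\infty \cdot (2/\mu_1)\, e^{-\mu_1 t/4}$ (note $g$ is bounded since it is continuous with $g(\tau)\to 0$, indeed $\|g\|_\infty$ is controlled by $c_\pm$), while on $[t/2, t]$ it is bounded by $(2/\mu_1)\sup_{\tau \in [t/2, t]} g(\tau)$; squaring and using $|\tilde s|^2 + |w|^2 \lesssim \mathcal L$ yields \eqref{eq:dcy1}. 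For part (2), plugging $g(\tau) \leq C_1 e^{-r_1 \tau}$ directly into the Duhamel integral gives an exponential bound with rate any $r_2 < \min\{\mu_1/2, r_1\}$, with the borderline case $\mu_1/2 = r_1$ handled by absorbing a polynomial factor into a slightly smaller exponent.

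The main obstacle is the calibration of the hypocoercive weight $\alpha$: it must balance positivity of $\mathcal L$ (demanding $\alpha < \sqrt{\bar c}$) against coercivity of $-\mathcal L'$ (demanding $\alpha$ small relative to $\nu$ and $\bar c$), and the quantitative dependence of $\mu_1$ on these parameters must be tracked through to the final rates $r_0, r_2$ claimed in the lemma. This is the classical Villani-style hypocoercivity trade-off, but being two-dimensional it can be carried out by hand via a single weighted Young inequality with an explicit constant.
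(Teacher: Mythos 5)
Your proposal is correct and follows essentially the same route as the paper: the functional $\tfrac{\bar c}{2}\tilde s^2+\tfrac12 w^2+\alpha w\tilde s$ is exactly the paper's $\calL+\lambda\calX$, and the argument proceeds identically via a coercive differential inequality, Gr\"onwall, and splitting the Duhamel integral at $t/2$. The only (harmless) variations are that you keep the source term as $g\sqrt{\mathcal L}$ and linearize via $F=\sqrt{\mathcal L}$ --- which needs the standard care where $\mathcal L$ vanishes --- whereas the paper simply bounds $|sw|\le B^2$ to get a source term $N|c-\bar c|$; your version even yields the slightly stronger tail $(\sup g)^2$ in place of $\sup g$.
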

\begin{proof}
We define the energy functional 
\[
\calL(t):= \frac{\bar{c}}{2}\lt(s(t)-\frac{1}{\bar c}\rt)^2 + \frac12 w(t)^2.
\]
A direct computation using \eqref{eq:1sys} and \eqref{eq:swB} yields
\[
\calL'(t) = -\nu w^2 - (c-\bar{c})sw \le -\nu w^2 + |c-\bar c|B^2 .
\]
To reveal the hidden dissipation structure and control the coupling between $s$ and $w$, we introduce the cross term
\[
\calX(t):=\lt(s(t)-\frac{1}{\bar c}\rt)w(t). 
\]
This cross term is crucial for establishing hypocoercivity estimates in the system.

Its temporal derivative is computed as
\[
\begin{split}
\calX' =&\,w^2 + \lt(s-\frac{1}{\bar{c}}\rt)(-\nu w + 1 - c s) \\
=&\,- \bar c \lt(s-\frac{1}{\bar c}\rt)^2 + w^2 -\nu \lt(s-\frac{1}{\bar c}\rt)w -(c-\bar c)s\lt(s-\frac{1}{\bar c}\rt) \\
=:&\,-\bar c  \lt(s-\frac{1}{\bar c}\rt)^2 + \calR,
\end{split}
\]
where the remainder term satisfies
\[
|\calR| \le \frac{\bar c }{2}\lt(s-\frac{1}{\bar c}\rt)^2 +\lt(1+\frac{\nu^2}{2\bar c }\rt)w^2 + |c-\bar c|B\lt(B+\frac{1}{\bar c}\rt).
\]
Hence, we obtain the differential inequality
\bq \label{eq1346fri}
(\calL +\lambda \calX)' \leq -\lt(\nu - \lambda\lt(1+\frac{\nu^2}{2\bar c }\rt)\rt) w^2 - \frac{\lambda \bar c }{2} \lt(s-\frac{1}{\bar c}\rt)^2 + |c-\bar c|B\lt(B+\lambda\lt(B+\frac{1}{\bar c}\rt)\rt)
\eq
for a small parameter $\lambda > 0$. Choosing $\lambda$ sufficiently small so that
\bq \label{lam_bound1}
\nu - \lambda\lt(1+\frac{\nu^2}{2\bar c }\rt)\geq \frac{\lambda}{2},
\eq
and setting
\begin{equation*}
    \begin{aligned}
        N:=B\lt(B+\lambda\lt(B+\frac{1}{\bar c}\rt)\rt),
    \end{aligned}
\end{equation*}
then we reduce from \eqref{eq1346fri} that
\bq\label{eq:hypo:pre}
    (\calL +\lambda \calX)' \leq  - \lambda \calL + N|c-\bar c|.
\eq
To make $\calL$ and $\calL + \lambda \calX$ be comparable, we observe that 
    $|\calX(t)| \leq \frac{1}{\sqrt{\bar c}}\calL(t)$, 
and hence,
\begin{equation*}
    \lt(1-\frac{\lambda}{\sqrt{\bar c}}\rt) \calL(t) \leq \calL(t) + \lambda \calX (t) \leq    \lt(1+\frac{\lambda}{\sqrt{\bar c}}\rt) \calL(t).
\end{equation*}
In particular, if $\lt(1-\frac{\lambda}{\sqrt{\bar c}}\rt) \geq \frac{1}{2}$ and $\lt(1+\frac{\lambda}{\sqrt{\bar c}}\rt) \leq 2$, then we get
\[
\frac{1}{2} \calL(t) \leq \calL(t) + \lambda \calX (t) \leq  2\calL(t).
\]
Combining this with \eqref{lam_bound1}, we finally fix 
\[
\lambda := \min\lt\{ \nu\lt(\frac{\nu^2}{2\bar c} + \frac{3}{2}\rt)^{-1}, \ \frac{\sqrt{\bar c}}{2} \rt\}>0.
\]
As a consequence, it follows that
\[
\frac{1}{2}\calL(t) \leq (\calL(t)+ \lambda \calX(t))=:y(t),
\]
and \eqref{eq:hypo:pre} becomes
\begin{equation*}
    y'(t) \leq -\frac{\lambda}{2} y(t) +N|c(t)-\bar{c}|. 
\end{equation*}
We use Gr\"onwall's inequality to obtain that
\begin{equation}\label{eq1807sun}
    \begin{aligned}
        y(t) &\leq y(0)e^{-\frac{\lambda}{2} t} +N\int_0^t e^{-\frac{\lambda}{2}(t-\tau)}|c(\tau)-\bar{c}| \,\textnormal{d} \tau\\
        & \leq y(0)e^{-\frac{\lambda}{2} t} +2N\frac{(c_+-c_-)}{\lambda} (e^{-\lambda t/4}-e^{- \lambda t/2}) +\frac{2N}{\lambda}\sup_{\tau \in [t/2, t]} g(\tau).
    \end{aligned}
\end{equation}
This proves \eqref{eq:dcy1} for $c(t)$ satisfying \eqref{eq:l:c1}. On the other hand, if $c(t)$ satisfies 
\eqref{eq:l:c2}, then it follows from the first line of \eqref{eq1807sun} that we obtain the exponential decay rate of convergence, leading to \eqref{eq:dcy2}.
This completes the proof.
\end{proof}

\subsection{Proof of Theorem \ref{thm:general}}

For each $x\in \T$, we define the Lagrangian variables $s=s(t), w=w(t)$ as in \eqref{eq:swc}. 
By the assumptions \eqref{eq:asm}, we have the uniform bounds
\[
\frac{1}{\rho} \le \frac{1}{\rho_-}, \quad \lt|\frac{\pa_x u}{\rho} \rt| \le \frac{M}{\rho_-}
\]
so that \eqref{eq:swB} holds with
\[
B:=\frac{\max\{1,M\}}{\rho_-}.
\]
Moreover, for any $x\in \T$, we observe that
\[
|c(t,x(t,x))-\bar c| \le \|c(t,\cdot)-\bar c\|_{L^\infty(\T)}=:g(t),
\]
thus, the conditions \eqref{eq:l:c1} and \eqref{eq:l:c2} hold uniformly in $x\in \T$ thanks to \eqref{eq:c1} and \eqref{eq:c2}, respectively.

To establish the convergence of $\rho-\bar{c}$, $\pa_xu$, we note that
\begin{equation*}
    |\rho-\bar c| = \lt|\rho \cdot \bar c\left(\frac{1}{\rho} - \frac{1}{\bar c}\right)\rt| \leq \rho_+  \bar c \left|\frac{1}{\rho} - \frac{1}{\bar c}\right|
\end{equation*}
and
\begin{equation*}
   |\pa_xu| = \left|\frac{\pa_x u}{\rho}\right||\rho| \leq \rho_+ \left|\frac{\pa_x u}{\rho}\right|.  
\end{equation*}
Hence, the decay of $s(t) - \frac{1}{\bar{c}}$ and $w(t)$ obtained in Lemma \ref{lem:phase} directly implies the convergence of $\rho-\bar{c}$ and $\partial_x u$ in $L^\infty(\T)$ as $t\to\infty$.

The decay of $\pa^2_x \phi$ can be easily seen as
\[
 \| \pa^2_x\phi\|_{L^\infty(\T)}= \|\rho -c\|_{L^\infty(\T)} \le \|\rho -\bar{c}\|_{L^\infty(\T)} + \|c-\bar{c}\|_{L^\infty(\T)}.
\]
Since both terms decay, we deduce that $\|\partial_x^2 \phi(t)\|_{L^\infty(\T)} \to 0$ as $t\to\infty$.

To prove the convergence of $\pa_x\phi$, we note that 
\[
\intt \pa_x\phi\,\dx = 0.
\]
Thus, by Sobolev and H\"older inequalities, we find that
\[
\|\pa_x\phi\|_{L^\infty(\T)}\le \| \pa^2_x\phi\|_{L^1(\T)} \le \| \pa^2_x\phi\|_{L^\infty(\T)},
\]
which implies the decay of $\|\partial_x \phi\|_{L^\infty(\T)}$.

Finally, we turn to the convergence of $u$. By integrating $\eqref{main_sys}_2$, we have
\[
\frac{\rd}{\dt}\intt u\,\dx = -\nu \intt u\,\dx. 
\]
Setting $m(t):=\intt u \,\dx$, we obtain the explicit formula
\[
m(t) = m(0)e^{-\nu t}.
\]
Hence, by Poincar{\'e} and H{\"o}lder inequalities, we deduce that
\[
\|u\|_{L^2(\T)} \le \|u - m(t)\|_{L^2(\T)} + m(t) \le \|\pa_x u\|_{L^2(\T)} + m(t) \le \|\pa_x u\|_{L^\infty(\T)} +m(t).
\]
To obtain the convergence of $\|u\|_{L^{\infty}(\T)}$, we note that
\[
|u(x)| = \lt|u(y) + \int_y^x \pa_z u(z)\,\dz\rt| \le |u(y)| + \|\pa_x u\|_{L^\infty(\T)}
\]
for any $x, y \in \T$. By integrating in $y$ variable, we obtain
\[
|u(x)| \le \lt(\intt |u(y)|^2\,\dy\rt)^{1/2} + \|\pa_x u\|_{L^\infty(\T)} = \|u\|_{L^2(\T)} + \|\pa_x u\|_{L^\infty(\T)},
\]
proving decay estimate of $u$ in $L^\infty$ norm. This completes the proof.

%
%
%
%
%
%
%
%
\section{Application to cold ion dynamics}\label{sec_cid}

We now apply the general framework developed in Section \ref{sec_pphe} to study the large-time behavior of the damped cold plasma model.

\subsection{Energy estimates}

For classical solutions to \eqref{eq:ion}, we introduce the free energy functional
\[
\calE(t):= \int_{\T} \frac{1}{2}\rho u^2 + \frac{1}{2} (\pa_x\phi)^2 +e^\phi(\phi -1) + 1 \,\dx,
\]
which is dissipated over time due to damping. A direct computation yields the energy dissipation law:
\bq\label{eq:edr}
\frac{\textnormal{d}}{\dt}\calE(t) = -\nu \int_{\T} \rho u^2\,\dx.
\eq
Since the linear damping term acts only on the momentum and not directly on the other components of $\mathcal{E}(t)$, we establish exponential decay of the free energy via a hypocoercivity argument.

\begin{proposition}\label{prop:1}
Let $(\rho,u)$ be a global-in-time classical solution to \eqref{eq:ion} satisfying the uniform bounds \eqref{eq:asm} as indicated in Theorem \ref{thm:ion}.  Then there exists a constant $r>0$, depending only on $\nu, \rho_\pm, M$, and $\calE(0)$, such that 
\[
\calE(t) \le 3 \calE(0)e^{-rt}, \quad \forall \,t>0.
\]
\end{proposition}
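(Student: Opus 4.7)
The strategy is a hypocoercivity argument. Split $\calE(t) = \calK(t) + \calP(t)$ into the kinetic part $\calK := \tfrac{1}{2}\int_\T \rho u^2\,\dx$ and the potential part $\calP := \int_\T \tfrac{1}{2}(\pa_x\phi)^2 + e^\phi(\phi-1) + 1\,\dx$. Since the dissipation identity \eqref{eq:edr} controls only $\calK$, I would introduce the cross term
\[
\calX(t) := \int_\T \rho u \, \pa_x\phi\,\dx
\]
and the modified energy $\calF := \calE + \lambda \calX$ for a small $\lambda>0$, and then show (i) the equivalence $\tfrac{1}{2}\calE \le \calF \le \tfrac{3}{2}\calE$, and (ii) the coercivity $\calF'(t) \le -r\,\calF(t)$. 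Cauchy--Schwarz applied to $\calX$ (using $\rho\le\rho_+$ and $\int(\pa_x\phi)^2 \le 2\calP$) yields $|\calX|\le \sqrt{\rho_+}\,\calE$, so (i) holds once $\lambda \le 1/(2\sqrt{\rho_+})$. The actual work is in (ii), and the final factor $3$ in the statement emerges from the chain $\calE \le 2\calF \le 2\calF(0)e^{-rt} \le 3\calE(0)e^{-rt}$ via Gr\"onwall.

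For (ii), using the conservative form $\pa_t(\rho u) = -\pa_x(\rho u^2) - \nu \rho u - \rho \pa_x\phi$, the continuity equation, and integration by parts on the torus, one derives
\[
\calX'(t) = -\int_\T \rho (\pa_x\phi)^2\,\dx - \nu\int_\T \rho u\,\pa_x\phi\,\dx - \int_\T \rho u^2 (\rho - e^\phi)\,\dx + \int_\T \pa_t\rho \cdot \pa_t\phi\,\dx.
\]
The first term provides the sought dissipation on $\pa_x\phi$, since it dominates $-\rho_-\int(\pa_x\phi)^2$. The delicate positive term is the last one: differentiating the Poisson--Boltzmann equation in $t$ gives $(-\pa_{xx} + e^\phi)\pa_t\phi = \pa_t\rho = -\pa_x(\rho u)$; testing against $\pa_t\phi$ expresses it as $\int |\pa_{xt}\phi|^2 + e^\phi|\pa_t\phi|^2$, while rewriting it via integration by parts as $\int \rho u \, \pa_{xt}\phi$ and applying Cauchy--Schwarz self-absorbs the $H^1$-norm of $\pa_t\phi$, yielding $\int \pa_t\rho \cdot \pa_t\phi\,\dx \le 2\rho_+ \calK$. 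Young's inequality and the bounds $\rho, e^\phi \in [\rho_-,\rho_+]$ (the upper and lower bounds for $e^\phi$ follow from the maximum principle applied to $-\pa_{xx}\phi = \rho - e^\phi$) handle the remaining two terms, producing $\calX'(t) \le -\tfrac{\rho_-}{2}\int_\T (\pa_x\phi)^2\,\dx + C_\ast \calK$ with $C_\ast = C_\ast(\nu, \rho_\pm)$.

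The main obstacle is converting the dissipation on $\int(\pa_x\phi)^2$ into dissipation on the \emph{nonquadratic} potential $\calP$, since $\calP$ contains $\int f(\phi)\,\dx$ with $f(\phi) := e^\phi(\phi - 1) + 1$. The $L^\infty$ bound on $\phi$ from the maximum principle gives $f(\phi) \le C(\rho_\pm)\phi^2$ pointwise, so it suffices to bound $\int\phi^2$ by $\int(\pa_x\phi)^2$. Writing $\phi = \bar\phi + \tilde\phi$ with $\int_\T \tilde\phi\,\dx = 0$, the unit-mass condition \eqref{eq:mb:neu} reads $e^{-\bar\phi} = \int_\T e^{\tilde\phi}\,\dx$; a second-order Taylor expansion together with $\int\tilde\phi = 0$ and the $L^\infty$ bound on $\tilde\phi$ gives $|\bar\phi| \le C(\rho_\pm)\int \tilde\phi^2\,\dx$, and Poincar\'e yields $\int\tilde\phi^2\,\dx \le C\int(\pa_x\phi)^2\,\dx$. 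Combining these produces $\calP \le C'(\rho_\pm)\int(\pa_x\phi)^2\,\dx$. Plugging in, $\calF' \le -2\nu\calK + \lambda\calX' \le -\nu\calK - c\int(\pa_x\phi)^2 \le -r(\calK + \calP) \le -\tfrac{2r}{3}\calF$ once $\lambda$ is fixed small enough in terms of $\nu, \rho_\pm$. Gr\"onwall's inequality then delivers $\calF(t) \le \calF(0)e^{-rt}$ (relabeling $r$), and the equivalence $\calE \le 2\calF$ yields $\calE(t) \le 3\calE(0)e^{-rt}$ as claimed.
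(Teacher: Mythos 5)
Your proof is correct, and the overall architecture --- modified energy $\calE+\lambda\calC$ with a velocity--field cross term, equivalence of norms, Gr\"onwall, and the factor $3$ from $\calE\le 2\calF\le 3\calE(0)e^{-rt}$ --- is exactly the paper's. Your treatment of the hardest term, $\int u\,\pa_x\phi_t\,\dx$ (equivalently $\int\pa_t\rho\,\pa_t\phi\,\dx$), via testing the differentiated Poisson--Boltzmann equation against $\phi_t$ and self-absorbing $\int(\pa_x\phi_t)^2$, is also identical. You diverge in three details, each legitimate and two of them arguably improvements. First, you take the momentum-weighted cross term $\int\rho u\,\pa_x\phi\,\dx$ rather than the paper's $\int u\,\pa_x\phi\,\dx$; writing $\pa_t(\rho u)$ in conservative form turns the convective contribution into $-\int\rho u^2(\rho-e^\phi)\,\dx$, controlled by $\calK$ through $\rho_\pm$ alone, whereas the paper must estimate $\int u\,\pa_xu\,\pa_x\phi\,\dx$ and hence invokes $\|\pa_xu\|_{L^\infty}\le M$; your rate $r$ therefore does not actually depend on $M$. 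Second, you bound $\|\phi\|_{L^\infty}$ by the maximum principle for $-\pa_{xx}\phi=\rho-e^\phi$, giving $\rho_-\le e^\phi\le\rho_+$ with constants depending only on $\rho_\pm$, while the paper uses the neutrality condition (existence of a zero of $\phi$) together with $\int(\pa_x\phi)^2\le 2\calE(0)$, which is what introduces the $\calE(0)$-dependence. Third, for the entropy term you pass through $\int\phi^2$, splitting $\phi=\bar\phi+\tilde\phi$ and using $\int_\T e^\phi\,\dx=1$ to control the mean $\bar\phi$ quadratically in $\|\tilde\phi\|_{L^2}$; the paper's route is shorter --- it bounds $e^\phi(\phi-1)+1\le e^A(e^\phi-1)^2$ and applies Poincar\'e directly to $e^\phi-1$, which already has zero mean by neutrality --- but both yield $\calP\le C\int(\pa_x\phi)^2\,\dx$. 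Every step of your argument checks out, including the equivalence $|\calX|\le\sqrt{\rho_+}\,\calE$ and the final bookkeeping.
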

\begin{proof}
To obtain the hypocoercivity structure, we introduce the cross term
\[
\calC(t) :=  \int_{\T} u \pa_x\phi\, \dx.
\]
We then compute its temporal derivative as
\begin{align*}
\calC'(t) &= -\intt ( u\pa_x u + \nu u ) \pa_x\phi \,\dx - \intt (\pa_x\phi)^2 \,\dx + \intt u (\pa_x\phi_t) \,\dx\cr
&=:I + II + III.
\end{align*}
We observe that $II$ has a favorable (negative) sign, while $I$ and $III$ require careful estimates.

For $I$, we obtain
\[
\begin{split}
|I| &\le (\|\pa_x u\|_{L^\infty(\R_+\times\T)} + \nu) \int_{\T} |u \pa_x \phi| \,\dx \\
&\le \frac{(M + \nu)^2}{ \rho_-}\int_{\T} \frac{1}{2}\rho u^2\,\dx + \int_{\T} \frac{1}{2} (\pa_x \phi)^2\,\dx.
\end{split}
\]

For $III$, we differentiate the Poisson--Boltzmann equation $\eqref{eq:ion}_3$:
\[
\lt(-\pa_{xx} + e^{\phi}\rt)\phi_t= \rho_t = -\pa_x(\rho u).
\]
Testing against $\phi_t$ and integrating by parts and then applying Young's inequality, we find
\[
\begin{split}
\int_{\T} (\pa_x \phi_t)^2 + e^{\phi}(\phi_t)^2\,\dx &=  - \int_{\T} \pa_x(\rho u)\phi_t\,\dx \\
&= \int_{\T}  \rho u(\pa_x \phi_t)\,\dx \\
&\le \int_{\T} \frac{1}{2}(\rho u)^2\,\dx + \int_{\T} \frac{1}{2}  (\pa_x \phi_t)^2\,\dx.
\end{split}
\]
Thus, we estimate
\[
|III| \le \int_{\T} \frac{1}{2}u^2\,\dx + \int_{\T} \frac{1}{2} (\pa_x\phi_t)^2\,\dx \le \int_{\T} \frac{1}{2}u^2\,\dx + \int_{\T} \frac{1}{2} (\rho u)^2\,\dx \le \lt(\frac{1}{\rho_-} + \rho_+ \rt) \int_{\T} \frac{1}{2} \rho u^2\,\dx.
\]
Defining
\[
\Lambda:= \frac{(M + \nu)^2}{ \rho_-} + \lt(\frac{1}{\rho_-} + \rho_+ \rt) \ge 2,
\]
we deduce 
\[
\frac{\textnormal{d}}{\dt} \lt( \calE(t) + \lambda \calC(t) \rt) \le -(2\nu - \Lambda \lambda) \int_{\T} \frac{1}{2}\rho u^2\,\dx - \lambda \int_{\T} \frac{1}{2} (\pa_x \phi)^2\,\dx
\]
for any $\lambda > 0$.

It remains to control the entropy term
\[
\int_{\T} e^\phi(\phi-1)+1\,\dx.
\]
We  observe that the quantity $\|e^\phi -1\|_{L^\infty(\T)}$ can be controlled by free energy. Indeed, thanks to \eqref{eq:mb:neu}, there exists $\bar{x} \in \T$ such that $\phi(\bar{x})=0$. Thus, for any $x\in\T$, we observe that
\bq\label{eq:phi:dcy}
|\phi(x)|= |\phi(x)-\phi(\bar{x})| = \lt|\int_{\bar{x}}^{x} \pa_y \phi(y)\,\dy\rt| \le \lt(\int_\T |\pa_y \phi(y)|^2\,\dy\rt)^{1/2} \le (2\calE(t))^{1/2}.
\eq
In particular, combining \eqref{eq:phi:dcy} with the energy dissipation estimate \eqref{eq:edr}, we have 
\bq\label{eq:phibd}
\|\phi\|_{L^\infty(\T)} \le (2\calE(0))^{1/2} =: A.
\eq
We then interpret $s:=e^\phi$, and apply Taylor's expansion for $s \mapsto s \log s -s +1$ to estimate the entropy term as
\[
\int_{\T} e^{\phi}(\phi-1)+1 \,\dx \le e^{A}\int_{\T} (e^\phi-1)^2 \,\dx.
\]
Due to the neutrality $\int_{\T} e^\phi -1\,\dx =0$ and Poincar{\'e} inequality, we have
\[
\intt (e^{\phi}-1)^2\,\dx \le \frac{1}{4}\intt(\pa_x(e^\phi))^2\,\dx \le  \frac{e^{2A}}{4}\intt (\pa_x \phi)^2\,\dx.
\]
This shows that the entropy term can be controlled by electric energy. Hence, we deduce
\[
\frac{\textnormal{d}}{\dt} \lt( \calE(t) + \lambda \calC(t) \rt) \le -(2\nu - \Lambda \lambda) \int_{\T} \frac{1}{2}\rho u^2\,\dx - \frac{\lambda}{2}\int_{\T} \frac{1}{2} (\pa_x \phi)^2\,\dx - \lambda e^{-3A}\intt e^\phi(\phi-1)+1\,\dx.
\]
To choose $\lambda>0$, we estimate the size of crossing term
\[
|\calC(t)| = \lt|\int_{\T} u\pa_x\phi\,\dx \rt| \le \frac{1}{\rho_-} \int_{\T}  \frac{1}{2}\rho u^2\,\dx + \int_{\T} \frac{1}{2}(\pa_x \phi)^2\,\dx.
\]
Since $0<\rho_- \le 1$, we obtain
\[
\lt(1- \frac{\lambda}{\rho_-}\rt) \calE(t) \le \calE(t) + \lambda \calC(t) \le \lt(1+ \frac{\lambda}{\rho_-}\rt) \calE(t).
\]
We now take 
\[
\lambda:= \min\lt\{ \frac{\nu}{\Lambda}, \frac{\rho_-}{2} \rt\}>0,
\]
then we have
\[
\frac{\textnormal{d}}{\dt} \lt( \calE(t) + \lambda \calC(t) \rt) \le -\kappa \calE(t) \le -\frac{2\kappa}{3}(\calE(t)+ \lambda\calC(t)),
\]
where
\[
\kappa:=\min\lt\{\nu,  \lambda e^{-3A}\rt\} =\lambda e^{-3A}>0.
\]
Thus, we conclude that
\[
\calE(t) \le 2(\calE(t) + \lambda\calC(t)) \le 2(\calE(0) + \lambda\calC(0))e^{-\frac{2\kappa}{3}t} \le 3\calE(0)e^{-\frac{2\kappa}{3}t}.
\]
This completes the proof.
\end{proof}

\subsection{Proof of Theorem \ref{thm:ion}}


We first observe from \eqref{eq:phibd} that
\[
e^{-(2\calE(0))^{1/2}} \le e^{\phi(t,x)} \le e^{(2\calE(0))^{1/2}}\quad \forall (t,x)\in\R_+\times \T,
\]
confirming \eqref{eq:cbd}. On the other hand, by applying the mean value theorem and employing \eqref{eq:phi:dcy}, we obtain
\[
 \|e^{\phi} -1 \|_{L^\infty(\T)} \le e^{\|\phi\|_{L^\infty(\T)}}\|\phi\|_{L^\infty(\T)}\le C_* \calE(t)^{1/2}, 
\]
where $C_*=2^{1/2}e^{(2\calE(0))^{1/2}}>0$. We then apply Proposition \ref{prop:1} to obtain
\[
\|e^\phi- 1\|_{L^\infty} \leq C_1e^{-r_1 t}
\]
for some $C_1=C_1(\calE(0))>0$ and $r_1=r_1(\nu,\rho_\pm,M,\calE(0))>0$.
This verifies \eqref{eq:c2}. Combined with uniform bounds \eqref{eq:asm2}, we apply Theorem \ref{thm:general} to obtain the decay estimates of the terms 
\[
\|(\rho-1\,,\,u,\,\partial_x u\,,\,\pa_x\phi,\pa^2_x\phi)\|_{L^{\infty}(\T)},
\]
thereby concluding the proof.



\end {document}